\def\bu{$\bullet$\quad}
\def\CA{\mathcal A}
\def\CB{\mathcal B}
\def\Cal#1{\mathcal{#1}}
\def\CC{\mathbb C}
\def\CO{\mathcal O}
\def\DD{\mathbb D}
\def\eps{\varepsilon}
\def\Imm{\operatorname{Im}}
\def\Ree{\operatorname{Re}}
\def\wdtl{\widetilde}
\def\th@mytheorem{%
  \let\thm@indent\noindent
  \thm@headfont{\bfseries}
    \itshape
}
\def\th@myremark{%
  \let\thm@indent\noindent
  \thm@headfont{\bfseries}
}
\theoremstyle{mytheorem}
\newtheorem*{Theorem*}{Theorem}
\theoremstyle{myremark}
\newtheorem*{Remark*}{Remark}
\begin{document}
\title{A counterexample to a theorem of Bremermann on Shilov boundaries}

\author[M.~Jarnicki]{Marek Jarnicki}
\address{Jagiellonian University, Faculty of Mathematics and Computer Science, Institute of Mathematics,
{\L}ojasiewicza 6, 30-348 Krak\'ow, Poland}
\email{Marek.Jarnicki@im.uj.edu.pl}

\author[P.~Pflug]{Peter Pflug}
\address{Carl von Ossietzky Universit\"at Oldenburg, Institut f\"ur Mathematik,
Postfach 2503, D-26111 Oldenburg, Germany}
\email{Peter.Pflug@uni-oldenburg.de}
\thanks{The research was partially supported by
grant no.~UMO-2011/03/B/ST1/04758 of the Polish National Science Center (NCN)}

\begin{abstract}
We give a counterexample to the following theorem of Bremermann on Shilov boundaries (\cite{Bre1959}): if $D$ is a bounded domain in $\CC^n$ having a univalent envelope of holomorphy,
say $\wdtl D$, then the Shilov boundary of $D$ with respect to the algebra $\CA(D)$, call it $\partial_SD$, coincides with the corresponding one for $\wdtl D$, called
$\partial_S\wdtl D$.
\end{abstract}

\subjclass[2010]{32D10, 32D15, 32D25}

\keywords{Shilov boundary, Bergman boundary}

\maketitle

Let us first repeat some basic notions: let $D$ be a bounded domain in $\CC^n$. Put $\CA(D):=\Cal C(\overline D)\cap\CO(D)$. Then $\CA(D)$ together with the supremum norm
$\|\cdot\|_{\overline D}$ is a Banach algebra of functions on $\overline D$. Moreover, we set $\CB(D)$ as the closure of $\CO(\overline D)$ in $\Cal C(\overline D)$ with the above norm.
Again this is a Banach algebra. Then both of these algebras have a Shilov boundary, called $\partial_SD$, respectively, $\partial_BD$. Obviously, we have
$\partial_BD\subset\partial_SD$.

Now we assume in addition that $D$ has a univalent envelope of holomorphy $\wdtl D$. Then, we have
$$
\partial_S\wdtl D\subset\partial_SD\subset\partial D \text{ and } \partial_B\wdtl D\subset\partial_BD\subset\partial D.
$$
In \cite{Bre1959} (see Theorem in section 6.6), Bremermann  claimed that also equality $\partial_S\wdtl D = \partial_SD$
is true. While he proved $\partial_S\wdtl D\subset\partial_SD$ in detailed way he was saying that the inverse inclusion is an obvious fact. But as we will show (more than fifty years later) this claim is not true, already if $D$ is a Hartogs domain over an annulus.
For positive results regarding Reinhardt domains see \cite{KosZwo2013}.

Following the construction of some Hartogs domain with non univalent envelope of holomorphy (see \cite{JarPfl2000}, pages 1-2) we get the following result.

\begin{Theorem*} There exists a bounded Hartogs domain $D\subset\CC^2$ with a univalent envelope of holomorphy $\wdtl D$ such that

\bu $\partial_SD\neq\partial_S\wdtl D$,

\bu $\partial_BD\neq\partial_B\wdtl D$,

\bu there exists a function $f\in\CO(\overline D)$ such that the holomorphic extension $\wdtl f$ of $f|_D$ to $\wdtl D$ has no continuous extension to $\overline {\wdtl D}$.
\end{Theorem*}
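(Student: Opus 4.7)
The plan is to exhibit $D$ as a Hartogs domain over an annulus $A$, identify its envelope $\wdtl D$ explicitly, and then produce a single function $f\in\CO(\overline D)$ from which all three bullets follow.

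For the first step I would take $A=\{r_1<|z|<r_2\}$ and set
$$D = \{(z,w)\in\CC^2 : z\in A,\ |w|<r(|z|)\},$$
with $r:A\to(0,\infty)$ continuous, rotationally symmetric, and with $-\log r$ \emph{not} subharmonic (for instance, strictly concave in $\log|z|$ on a sub-interval). By the Hartogs/Oka description of envelopes of such fibered domains, the envelope of holomorphy is then the automatically univalent Hartogs domain
$$\wdtl D = \{(z,w)\in\CC^2 : z\in A,\ |w|<\wdtl r(|z|)\},$$
where $-\log\wdtl r$ is the largest subharmonic minorant of $-\log r$ on $A$; one can arrange $\wdtl r(|z|)=|z|^{1/m}$ for a convenient integer $m\ge 2$, so that the ``roof'' $\{|w|=\wdtl r(|z|)\}$ of $\wdtl D$ is cut out by $|w|^m=|z|$, and $\wdtl r>r$ strictly on the interior of $A$.

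The heart of the proof --- and the main obstacle --- is the third bullet. I would produce a holomorphic $f$ on a neighborhood of $\overline D$ whose extension $\wdtl f$ to $\wdtl D$ blows up at an interior boundary point $(z_0,w_0)\in\partial\wdtl D$ with $|z_0|\in(r_1,r_2)$. The natural ansatz is a quotient $f=1/G$ whose zero-locus $V:=\{G=0\}$ avoids $\overline D$ but passes through $(z_0,w_0)$ --- for instance $G(z,w)=w^m-h(z)$ with a holomorphic $h$ satisfying $r(|z|)^m<|h(z)|\le|z|$ and $|h(z_0)|=|z_0|$. The delicate point is that $r=\wdtl r$ on $\partial A$, so the ``gap'' $\{r(|z|)<|w|<\wdtl r(|z|)\}$ between $\overline D$ and $\wdtl D$ collapses on the side walls $\{|z|=r_1,r_2\}$; the naive $V$ necessarily touches $\overline D$ there, and so $G$ must be carefully engineered (by multiplying $1/G$ by a suitable factor cancelling the unwanted boundary singularities, or by taking $G$ non-rational with singular locus missing the side walls of $\overline{\wdtl D}$) in order to guarantee $f\in\CO(\overline D)$ while preserving the singularity of $\wdtl f$ at $(z_0,w_0)$.

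Once such $f$ is in hand, the first two bullets follow from standard peaking arguments. Since $|\wdtl f|$ is unbounded on $\wdtl D$ but $f$ is bounded on $\overline D$, the supremum $\|f\|_{\overline D}$ is attained on the roof $\{|w|=r(|z|)\}$ at points near $z_0$ --- points that lie in the interior of $\wdtl D$ and hence outside $\partial\wdtl D\supset\partial_S\wdtl D\cup\partial_B\wdtl D$. Passing to large powers $f^N$, or multiplying $f$ by peak-type functions in $\CA(D)$, localizes the maximum enough to force such a point into $\partial_SD$; the same argument works for $\partial_BD$ since $f\in\CO(\overline D)\subseteq\CB(D)$. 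This yields $\partial_SD\neq\partial_S\wdtl D$ and $\partial_BD\neq\partial_B\wdtl D$, completing the proof.
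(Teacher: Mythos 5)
Your construction cannot work, for two independent reasons. First, a Hartogs domain whose fiber radius depends only on $|z|$ is a Reinhardt domain, and for bounded Reinhardt domains the equality $\partial_SD=\partial_S\wdtl D$ is actually \emph{true} (Kosi\'nski--Zwonek); this is precisely why the paper's closing Remark singles out the Reinhardt case as the positive one. Moreover $\overline D$ then has a neighborhood basis of Reinhardt domains $G$ whose envelopes contain $\overline{\wdtl D}$, so every $f\in\CO(\overline D)$ extends holomorphically past $\overline{\wdtl D}$ and $\partial_BD=\partial_B\wdtl D$ as well (this is the same argument the Remark gives for balanced domains). So all three bullets fail for your $D$, no matter how the profile $r$ is chosen.

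Second, and more fundamentally, your plan for the third bullet --- producing $f\in\CO(\overline D)$ whose extension $\wdtl f$ \emph{blows up} at a boundary point of $\wdtl D$ --- is impossible for \emph{any} bounded domain: the extension to the envelope of holomorphy always satisfies $\wdtl f(\wdtl D)\subset f(D)$, since for $w_0\notin f(D)$ the function $1/(f-w_0)$ extends to $\wdtl D$ and its extension must equal $1/(\wdtl f-w_0)$, so $\wdtl f$ omits $w_0$ too. In particular $\sup_{\wdtl D}|\wdtl f|=\sup_D|f|<\infty$, and no singularity can appear. The obstruction you noticed in your own ansatz ($|h(z)|\le|z|$ on $A$ with equality at an interior point forces $h(z)=cz$ by the maximum principle, so the zero set of $G$ must meet $\overline D$ on the side walls) is not a technical nuisance to be engineered away; it is this general fact in disguise. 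The only mechanism by which $\wdtl f$ can fail to extend continuously to $\overline{\wdtl D}$ is a \emph{bounded} discontinuity coming from monodromy, and that forces you to break the rotational symmetry in the base variable. This is exactly what the paper does: the fiber over the annulus $A$ jumps as $\arg z_1$ winds around ($|z_2|<3$ for $\Ree z_1<0$, $|z_2|<1$ for $\Imm z_1>0\le\Ree z_1$, $2<|z_2|<3$ for $\Imm z_1<0\le\Ree z_1$), the envelope fills in the full discs $\{z_1\}\times3\DD$ over the positive real axis, and a branch of $\log z_1$ chosen holomorphically on a neighborhood of $\overline D$ acquires two different boundary values ($\log z_1$ versus $\log z_1+2\pi i$) on those discs; exponentiating it yields a function in $\CB(D)$ whose modulus peaks exactly on $\{z_1\in(0,\infty),\ |z_2|\le1\}\cap\overline D$, a set which the maximum principle on the filled-in discs excludes from $\partial_S\wdtl D$.
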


\begin{proof} Let $A:=\{z\in\CC:1/2<|z|<1\}$. Then we introduce
\begin{align*}
D:&=\{z\in A\times\CC: \Ree z_1<0,\;|z_2|<3\}\\
&\cup\{z\in A\times\CC: 0\leq\Ree z_1,\; \Imm z_1>0,\; |z_2|<1\}\\
&\cup\{z\in A\times\CC: 0\leq\Ree z_1,\; \Imm z_1<0,\; 2<|z_2|<3\}.
\end{align*}
Note that $D$ is a Hartogs domain over the annulus $A$ which cuts the base $A\times\{0\}$. Then Corollary 3.1.10(b) in \cite{JarPfl2000} implies that $D$ has a univalent envelope of holomorphy $\wdtl D$. Moreover, using the Cauchy integral formula shows that $\wdtl D$ contains the following domain
$$
\{z\in A\times\CC: \Ree z_1<0 \text{ or (if } \Ree z_1\geq 0, \text{ then } \Imm z_1<0),\; |z_2|<3\}.
$$
In particular, all the discs $\DD_{z_1}:=\{z_1\}\times3\DD$, $1/2\leq\Ree z_1=z_1\leq 1$, belong to $\overline{\wdtl D}$ (here $\DD$ means the open unit disc in $\CC$).

Therefore, if $f\in \CA(\wdtl D)$, then $f(z_1,\cdot)\in \CA(3\DD)$ for all the above $z_1$ (use Weierstrass' theorem here). Hence, by the maximum principle, these discs don't contain any point of $\partial_S\wdtl D$.

On the other side we discuss the following domain
\begin{align*}
D':&=\{z\in A'\times\CC:\Ree z_1<\eps|\Imm z_1|,\; |z_2|<3+\eps\}\\
&\cup\{z\in A'\times\CC:0\leq\Ree z_1,\; \Imm z_1>-\eps\Ree z_1,\; |z_2|<1+\eps\}\\
&\cup\{z\in A'\times\CC:0\leq\Ree z_1,\; \Imm z_1<\eps\Ree z_1,\; 2-\eps<|z_2|<3+\eps\},
\end{align*}
where $0<\eps\ll 1/4$ and $A':=\{z\in\CC:1/2-\eps<|z|<1+\eps\}$. Observe that $D\subset\subset D'$.

Now we define the following concrete holomorphic function $g$ on $D'$:
$$
g(z):= \begin{cases}\log_1 z_1, &\text{ if } z\in D',\; |z_2|>1.4\\
\log_2z_1, &\text{ if } z\in D',\; |z_2|<1.6
\end{cases},
$$
where $\log_1$, respectively $\log_2$, is the branch of the logarithm function on $\CC\setminus \{w\in\CC:\Ree w\geq 0,\; \Imm w=\Ree w\}$, respectively on
$\CC\setminus\{w\in\CC: \Ree w\geq 0,\;\Imm w=-\Ree w\}$, with $\log_1(-1/2)=\log 1/2+i\pi$. Observe that $g$ is well defined on $D'$ and $g\in\CO(D')$.

Define $f:=g|_{\overline D}$. Then $f\in \CB(D)$ and  if $z\in\overline D$ with $z_1=\Ree z_1>0$, then
$$
f(z)=\begin{cases} \log\Ree z_1, &\text{ if } |z_2|\leq 1\\
                   \log\Ree z_1+2\pi i, &\text{ if }|z_2|\geq 2
\end{cases}.
$$

Finally, we observe that the function $h$ defined as $h(z):=\exp(i f(z)+2\pi)$, $z\in\overline D$, belongs to $\CB(D)$ and
$$
|h(z)|=\begin{cases} 1, &\text{ if } z\in\overline D,\; \Ree z_1>0,\; |z_2|\geq 2 \\
e^{2\pi}, &\text{ if } z\in\overline D,\; \Ree z_1>0,\;|z_2|\leq 1\\
\end{cases}
$$
and $|h(z)|<e^{2\pi}$ on the remaining part of $\overline D$. Therefore, $\partial_BD$ contains points $z\in\overline D$ with $0<\Ree z_1=z_1$ and
$|z_2|\leq 1$.

Combining this concrete information with the general one from the former discussion on the Shilov boundaries for $\wdtl D$  we conclude that $\partial_S\wdtl D$ and $\partial_B\wdtl D$ are strictly contained in $\partial_B D$. In particular, this shows that the claimed equality in Bremermann's paper does not hold.

Moreover, the function $f$ is the one whose existence was claimed in the third claim in the theorem.
\end{proof}

\begin{Remark*}
(a) Recall that the equality $\partial_SD=\partial_S\wdtl D$ is true for a Reinhardt domain $D$.

(b) Let $D$ be a bounded balanced domain. Obviously, $\overline D$ has a neighborhood basis of balanced domains $G$. Moreover, $D$ and each $G$ have univalent envelopes of
holomorphy with $\wdtl D\subset\subset\wdtl G$. Hence the equality $\partial_BD=\partial_B\wdtl D$ holds in an obvious way.

(c) What remains is to discuss the equality of the Shilov boundaries for $\CA(D)$ and $\CA(\wdtl D)$ in the case when $D$ is a balanced domain.
\end{Remark*}

\bibliographystyle{amsplain}

\end{document}